\newcommand{\eome}{{\rm \nabla\kern-.6em \nabla}}
\newcommand{\diag}{\mathrm{diag}}
\newcommand{\Argmax}{\mathrm{Argmax}}
\newtheorem{definition}{Definition}
\newtheorem{theo}{Theorem}
\newtheorem{corollary}{Corollary}
\newtheorem{remarkTh}{Remark}
\newtheorem{proposition}{Proposition}
\newenvironment{remark}[0]{\begin{remarkTh}}{\hfill$\square$\end{remarkTh}}
\newcommand{\infeq}{\leqslant}
\newcommand{\V}{\mathcal{V}}
\newcommand{\R}{\mathbb{R}}
\renewcommand{\S}{\mathbb{S}}
\renewenvironment{proof}[1][Proof]{\textbf{#1.} }{\ \rule{0.5em}{0.5em}}
\renewcommand\epsilon{\varepsilon}
\title{\LARGE \bf Stabilization of an unstable wave equation using an infinite dimensional dynamic controller}
\author{Matthieu~Barreau, Fr\'ed\'eric~Gouaisbaut and Alexandre~Seuret
\thanks{M. Barreau, F. Gouaisbaut, A. Seuret are with LAAS - CNRS, Universit\'e de Toulouse, CNRS, UPS, France e-mail: (mbarreau,aseuret,fgouaisb@laas.fr).}
\thanks{This work is supported by the ANR project SCIDiS contract number 15-CE23-0014.}
}
\begin{document}

\maketitle
\thispagestyle{empty}
\pagestyle{empty}

\begin{abstract} This paper deals with the stabilization of an anti-stable string equation with Dirichlet actuation where the instability appears because of the uncontrolled boundary condition. Then, infinitely many unstable poles are generated and an infinite dimensional control law is therefore proposed to exponentially stabilize  the system. The idea behind the choice of the controller is to extend the domain of the PDE so that the anti-damping term is compensated by a damping at the other boundary condition. Additionally, notice that  the system can then be exponentially stabilized with a chosen decay-rate and is robust to uncertainties on the wave speed and the anti-damped coefficient of the wave equation, with the only use of a point-wise boundary measurement. The efficiency of this new control strategy is then compared to the backstepping approach. 
\end{abstract}

\section{Introduction}

In recent years, we have seen a renewed interest in the control of infinite dimensional systems for both practical and theoretical considerations. Indeed, many complex systems may be easily modeled by Partial Differential Equations (PDE).  These include delay systems \cite{SAFI20171}, string/payload \cite{he2014adaptive}, MEMS \cite{flores2014dynamics} or drilling pipes \cite{bresch2014output,marquez2015analysis}, among many others \cite{bastin2016stability}. From a theoretical point of view, one has witnessed many contributions to these problems: backstepping method \cite{krstic2008boundary}, saturated control \cite{prieur2016wave} or event-based control~\cite{espitia2016event}.

Concerning the case of hyperbolic PDE and especially the string equation in a finite domain, even if the model is quite simple, there exist various control laws which can be distributed or only at the boundary for example. Firstly, notice that many different kinds of instabilities can affect the system as for instance internal anti-damping \cite{FREITAS1996338,ASJC:ASJC1509} or an unstable Robin boundary condition \cite{KRSTIC200863}. These two instabilities generally lead to a finite number of unstable poles. Another possible boundary condition which induces infinitely many unstable poles are reported in \cite{Lagnese1983163}. As noted in \cite{Lagnese1983163,lasiecka1992uniform}, this instability arises from the unstable difference operator which appears if the wave equation is modeled as a neutral time-delay system. The control of this anti-stable wave is therefore much more challenging. Furthermore, in general, if a standard feedback control law is designed, it is known to be not robust to input/output-delays \cite{datko1988not,datko1986example}. 

This paper deals with the stabilization of an antistable wave equation of the latter kind with Dirichlet actuation. Several methodologies have been proposed to stabilize this model as \cite{gugat2015exponential,lasiecka1992uniform}. Among them, a very popular approach refers to the backstepping methodology for infinite-dimensional systems introduced in \cite{krstic2009delay,krstic2008boundary}. The idea is to determine a feedback law such that the closed-loop system behaves as an asymptotically or exponentially stable  system with the desired properties, for instance a one side boundary damped wave equation. This leads to the design of an infinite dimensional control law which requires a distributed measure all over the domain \cite{krstic2009adaptive,smyshlyaev2009boundary}. Notice that if these measurements are not available, an observer can be designed to estimate this whole state, measuring the state and  its derivative at one boundary \cite{KRSTIC200863}.
The proposed methodology follows the same starting point. We aim at finding a control law in order to get, in closed-loop, a two sided boundary damped wave equation but contrary to the backstepping approach the target system is extended in the space domain (see for instance  \cite{hansen1995exact}).  This new idea simplifies the design of the control law. Firstly, it results in a very simple dynamic control law of infinite dimension. Secondly, the control law requires the measurement of only the state at one boundary. Furthermore, this methodology allows to obtain performances similar to approaches such as backstepping. An interesting feature relies also on the robustness of the approach since the parameters of the original system could be uncertain. 
 
The paper is organized as follows. In Section 2, the model for the wave equation as well as the control objectives are detailed. In Section~3, a new controller design is proposed. The existence and uniqueness of a solution to the closed-loop system is then studied. Therefore, an exponential stability result is derived and some extensions are provided. As the main result is formulated in terms of a Linear Matrix Inequality (LMI), the exponential stability result is extended in Section~4 to a robust stability analysis. 
Section 5 provides a numerical application of the proposed controller on two examples together with a comparison with the backstepping methodology.

\noindent \textbf{Notations:} Throughout this paper, the notation $u_t$ stands for $\frac{\partial u}{\partial t}$. The common spaces of square integrable functions on $[0,1]$ is denoted $L^2 = L^2([0,1]; \mathbb{R})$ and $H^n=\left\{ z \in L^2; \forall m \infeq n, \frac{\partial^m z}{\partial x^m} \in L^2 \right\}$ for the Sobolov spaces. 
$L^2$ is equipped with the norm $\|z\|^2 =  \int_0^1 |z(x)|^2  dx = \left<z,z\right>$. 
For any square matrices $A$ and $B$, $\text{diag}$ is defined as $\text{diag}(A,B) = \left[ \begin{smallmatrix}A & 0\\ 0 & B \end{smallmatrix} \right]$.
A matrix $P \in \R^{n \times n}$ is positive definite if it belongs to the set $\S^n_+$ or more simply $P \succ 0$. $I_{n}$ is the identity matrix of dimension $n\times n$ and $0_{n,m}$ is the null matrix of size $n \times m$.
\section{Problem Statement}
The wave equation studied in this paper is described by the following model:
\begin{equation} \label{eq:system1}
	\left\{
	\begin{array}{ll}
		u_{tt}(x,t) = c_1^2 u_{xx}(x,t), \quad & x \in (0, 1), \\
		u_x(0,t) = g u_t(0,t), & \\
		u(1,t) = w(t), \\
		y(t) = u_x(1,t), \\
		u(x,0) = u^0(x), \ u_t(x, 0) = u_t^0(x), & x \in (0, 1),
	\end{array}
	\right.
\end{equation}
 
 It represents the evolution of a wave equation of amplitude $u$ of speed $c_1$. At $x = 0$, there is a well-known boundary damping condition \cite{Lagnese1983163}. At $x = 1$, there is a Dirichlet actuation and $w$ is the control law. The only measurement is $y$, which is the space derivative of $u$ at $x = 1$. 

$g$ is closely related to the reflection coefficient at the boundary $x = 0$. It is well-known from \cite{Lagnese1983163} that for $g < 0$, this wave is unstable. Actually, this issue has also been discussed in \cite{hale2001effects} where it is compared to a neutral time-delay system. Indeed, for $g < 0$, the neutral time-delay system has a non-stable difference operator, making its stabilization possible only if $u_t(1, \cdot)$ is exactly and perfectly measured. This measurement is, in general, difficult to get since it relies on specific sensors which cannot provide the derivative at time $t$ but at a sightly delayed time. As enlighten in \cite{datko1988not,datko1986example}, a proportional feedback control on $u_t(1,\cdot)$ is not robust to time-delay. That is the reason why we need to consider that the measure of $u_t(1,\cdot)$ is not available, making the synthesis of a control law a real challenge. Notice that the famous methodology of backstepping, described in \cite{krstic2009delay} explains for instance how to build such a control law but it relies on the full distributed measurements of the state $u$ which are practically difficult to get.


Here, considering $g < 0$, we aim at showing that there exists an infinite-dimensional controller ensuring which is not issued from a backstepping methodology ensuring the $L_2$-stability of the closed-loop system without an explicit measurement of $u_t(1, \cdot)$.

\section{Controller Design}

The proposed controller is as follows where $h> 0$ and $q$ are the control design parameters:
\begin{equation} \label{eq:controller1}
	\left\{
	\begin{array}{ll}
		v_{tt}(x,t) = c_2^2 v_{xx}(x,t), \quad & x \in (0, 1), \\
		v_x(0,t) = y(t), & \\
		v_x(1,t) = -h v_t(1,t) - qv(1,t), \\
		w(t) = v(0,t) + r(t), \\
		v(x,0) = v^0(x), \ v_t(x, 0) = v_t^0(x), & x \in (0, 1).
	\end{array}
	\right.
\end{equation}

The reference is $r$ and we assume, without loss of generality, that $r \equiv 0$. This control is of infinite dimension and describes a wave equation. Even if an explicit control formulation of $w$ depending on the initial conditions and $y$ can be derived (using \cite{louw2012forced} for example), this controller is seen as an infinite-dimensional dynamic controller depicted in Fig.~\ref{fig:closedLoop1}. Note that, if $c_1 = c_2 = c$, controller~\eqref{eq:controller1} is an extension of system~\eqref{eq:system1}. Indeed, the closed-loop system is then an extended wave over a domain of length $2$ and speed $c$ with two damping or anti-damping boundary conditions. The stability of this interconnected system seems then quite simple and a stability test is provided in Section~3. The most interesting part comes when deriving a robust stability criterion to uncertainties on $c_1$ and $g$, as discussed in Section~4.

\begin{remark} The same methodology applies (and similar results are obtained) considering other boundary conditions: $u_x(1,t) = w(t)$ and $y(t) = u(1,t)$ for system~\eqref{eq:system1} while for the controller we use: $v(0,t) = y(t)$. The closed-loop is still an extension of the wave on the larger interval $(0, 2)$. \end{remark}

\begin{figure}
	\centering
	\includegraphics[width=8.4cm]{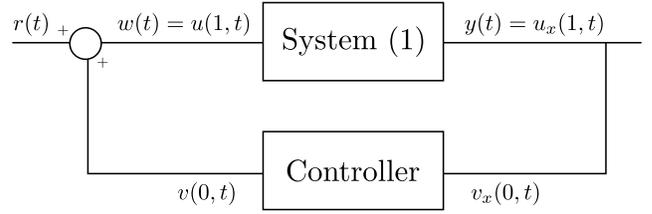}
	\caption{Block diagram of closed-loop system \eqref{eq:system1}-\eqref{eq:controller1}.}
	\label{fig:closedLoop1}
\end{figure}

\subsection{Existence and uniqueness}

%

As the string equation is a second order in time and space, the state $u$ must be regular enough such that the derivations have a sense. Then, as done in \cite{krstic2009delay,tucsnak2009observation}, the following space is defined:
\[
	\mathcal{H} = \left\{ (u, u_t, v, v_t) \in H^1 \times L^2 \times H^1 \times L^2, \ u(1) = v(0) \right\}.
\]

In some practical cases like in drilling systems \cite{bresch2014output}, a convergence in speed is required with no care on the position. Then, similarly to \cite{krstic2009delay,smyshlyaev2009boundary}, a seminorm on $\mathcal{H}$ is defined:
\begin{equation} \label{eq:seminorm}
	\|(u, u_t, v, v_t)\|_{\mathcal{H}}^2 = c_1^2 \|u_x\|^2 + \|u_t\|^2 + c_2^2 \|v_x\|^2 + \|v_t\|^2.
\end{equation}
Together with space $\mathcal{H}$, this is a semi-norm because a convergence in the sense of $\| \cdot \|_{\mathcal{H}}$ means $u_t, u_x, v_t$ and $v_x$ are converging to $0$ but there is no constraint on $u$ and $v$.

If a convergence in position is needed, the previous subspace is equipped with the following norm:
\begin{equation} \label{eq:norm}
	\|(u, u_t, v, v_t)\|_{\mathcal{H}_0}^2 = \|(u, u_t, v, v_t)\|_{\mathcal{H}}^2 + v(1)^2.
\end{equation}
This semi-norm implies the convergence of $u_x$, $u_t$, $v_x$, $v_t$ and $v(1)$ meaning $u$ and $v$ are converging to $0$ in $L_2$ norm.
With these previous definitions, $(\mathcal{H}, \|\cdot\|_{\mathcal{H}})$ and $(\mathcal{H}, \|\cdot\|_{\mathcal{H}_0})$ are Hilbert spaces. 

\begin{definition} System~\eqref{eq:system1}-\eqref{eq:controller1} is said to be \textbf{dissipative} in $(\mathcal{H}, \|\cdot\|_{\mathcal{H}})$ (resp. $(\mathcal{H}_0, \|\cdot\|_{\mathcal{H}_0})$) if there is a seminorm $\| \cdot \|$ equivalent to $\|\cdot\|_{\mathcal{H}}$ (resp. $\|\cdot\|_{\mathcal{H}_0}$) for which $\frac{d}{dt} \| (u, u_t, v, v_t) \| < 0$.
\end{definition}

The following proposition states the existence and uniqueness of solutions to \eqref{eq:system1}-\eqref{eq:controller1}.

\begin{proposition} For any initial condition $(u^0, u_t^0, v^0, v_t^0) \in \mathcal{H}$, there exists a unique solution to the latter system if system \eqref{eq:system1}-\eqref{eq:controller1} is dissipative in $(\mathcal{H}, \|\cdot\|_{\mathcal{H}})$.
\end{proposition}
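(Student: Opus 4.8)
The plan is to recast the closed-loop system \eqref{eq:system1}-\eqref{eq:controller1} as an abstract linear Cauchy problem on $\mathcal{H}$ and to apply the Lumer--Phillips theorem. Set $z=(u,u_t,v,v_t)$ and define the operator $\mathcal{A}$ by $\mathcal{A}(u,\tilde u,v,\tilde v)=(\tilde u,\ c_1^2 u_{xx},\ \tilde v,\ c_2^2 v_{xx})$ on the domain $D(\mathcal{A})=\{(u,\tilde u,v,\tilde v)\in\mathcal{H}:\ u,v\in H^2,\ \tilde u,\tilde v\in H^1,\ u_x(0)=g\tilde u(0),\ \tilde u(1)=\tilde v(0),\ v_x(0)=u_x(1),\ v_x(1)=-h\tilde v(1)-qv(1)\}$, so that \eqref{eq:system1}-\eqref{eq:controller1} becomes $\dot z=\mathcal{A}z$, $z(0)=z_0\in\mathcal{H}$. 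A first, routine step is to check that $D(\mathcal{A})$ is dense in $\mathcal{H}$ and that $\mathcal{A}$ is closed, both of which follow from the density of smooth functions and standard trace arguments.

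The dissipativity hypothesis is then used precisely to supply the dissipativity required by Lumer--Phillips: by Definition~1 there is a seminorm $\|\cdot\|_\star$, equivalent to $\|\cdot\|_{\mathcal{H}}$, whose time derivative along trajectories is negative, i.e. $\langle\mathcal{A}z,z\rangle_\star\le 0$ for all $z\in D(\mathcal{A})$, where $\langle\cdot,\cdot\rangle_\star$ is the inner product associated with $\|\cdot\|_\star$. Because $\|\cdot\|_\star$ and $\|\cdot\|_{\mathcal{H}}$ are equivalent, $(\mathcal{H},\langle\cdot,\cdot\rangle_\star)$ is a Hilbert space carrying the same topology, and it suffices to prove that $\mathcal{A}$ generates a $C_0$-semigroup on this renormed space; after a harmless shift $\mathcal{A}\mapsto\mathcal{A}-\omega I$ one may even assume strict dissipativity.

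The core step is the range (maximality) condition: there exists $\lambda>0$ with $\mathrm{ran}(\lambda I-\mathcal{A})=\mathcal{H}$. Given $(f_1,f_2,f_3,f_4)\in\mathcal{H}$, solving $(\lambda I-\mathcal{A})(u,\tilde u,v,\tilde v)=(f_1,f_2,f_3,f_4)$ forces $\tilde u=\lambda u-f_1$, $\tilde v=\lambda v-f_3$, and reduces to the two coupled constant-coefficient boundary-value problems $\lambda^2 u-c_1^2 u_{xx}=f_2+\lambda f_1$ and $\lambda^2 v-c_2^2 v_{xx}=f_4+\lambda f_3$ on $(0,1)$, where, after eliminating $\tilde u,\tilde v$, the conditions of $D(\mathcal{A})$ turn into Robin-type relations $c_1^2 u_x(0)=\lambda g\,u(0)-g f_1(0)$ and $v_x(1)=-(\lambda h+q)v(1)+h f_3(1)$ together with the couplings $u(1)=v(0)$ and $v_x(0)=u_x(1)$. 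I would handle this either variationally — a weak formulation on the closed subspace $\{(\varphi,\psi)\in H^1\times H^1:\ \varphi(1)=\psi(0)\}$, whose bilinear form collects the Dirichlet energies $c_1^2\int_0^1\varphi_x(\cdot)_x$, $c_2^2\int_0^1\psi_x(\cdot)_x$, the zero-order terms $\lambda^2\int_0^1\varphi(\cdot)$, $\lambda^2\int_0^1\psi(\cdot)$, and the boundary contributions $(\lambda h+q)\,\psi(0)(\cdot)(0)$ from $x=1$ and $-\lambda g\,\varphi(0)(\cdot)(0)$ from $x=0$; this form is bounded and, for $\lambda$ large enough (using $h>0$ and the compactness of the trace maps $H^1\to\R$), coercive, so Lax--Milgram yields a unique weak solution that elliptic regularity lifts to $H^2\times H^2$ — or explicitly, writing $u$ and $v$ in terms of $\cosh$ and $\sinh$ and reducing surjectivity to the non-singularity, for large $\lambda$, of a $4\times4$ matrix assembled from the four boundary relations. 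The expected main obstacle lies here: controlling the sign-indefinite anti-damping term at $x=0$ (recall $g<0$) and choosing $\lambda$ so that coercivity, respectively invertibility, survives.

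With density, closedness, dissipativity and maximality in hand, the Lumer--Phillips theorem gives that $\mathcal{A}$ (equivalently, the original generator up to the shift) generates a $C_0$-semigroup $(T(t))_{t\ge0}$ on $\mathcal{H}$. Consequently, for every $z_0\in\mathcal{H}$ the map $t\mapsto T(t)z_0$ is the unique mild solution of $\dot z=\mathcal{A}z$, hence the unique solution of \eqref{eq:system1}-\eqref{eq:controller1} in $\mathcal{H}$, and it is a classical solution whenever $z_0\in D(\mathcal{A})$; uniqueness follows from linearity and the semigroup property.
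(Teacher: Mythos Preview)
Your proposal is correct and follows essentially the same route as the paper: the paper's argument is a one-paragraph sketch that refers to Tucsnak--Weiss, uses the assumed dissipativity of the abstract operator, and reduces the question to the Lumer--Phillips range condition, which it phrases as ``show that $T$ is invertible'' rather than your ``$\mathrm{ran}(\lambda I-\mathcal{A})=\mathcal{H}$ for some $\lambda>0$''. The only noteworthy difference is this choice of which point of the resolvent set to check (the paper checks $0$, you check a large $\lambda>0$), and you have supplied considerably more detail on the domain, the coupled BVP, and the Lax--Milgram/explicit alternatives than the paper's sketch provides.
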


The proof is very similar to the one given in \cite[Ch. 3.9]{tucsnak2009observation} on the wave equation with boundary damping.  Assuming the dissipativity of the abstract operator $T$ related to system \eqref{eq:system1}-\eqref{eq:controller1}, it is enough to show that $T$ is invertible to apply Lumer-Phillips Theorem (see for instance Theorem~3.8.4 in \cite{tucsnak2009observation}). Once the solution is defined, the study of its equilibrium points can be pursued.

\begin{proposition} \label{sec:equilibirum} An equilibrium point $(u^e, v^e)$ of system \eqref{eq:system1}-\eqref{eq:controller1} verifies: $u^e = v^e \in \mathbb{R}$ and $q u^e = 0$. If $q \neq 0$, then the only equilibrium point is $0_{\mathcal{H}}$.
\end{proposition}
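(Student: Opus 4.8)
The plan is to use the defining feature of an equilibrium, namely that all time derivatives vanish, which collapses each wave equation to an ordinary differential equation in~$x$. Setting $u_t \equiv u_{tt} \equiv 0$ and $v_t \equiv v_{tt} \equiv 0$ in \eqref{eq:system1}--\eqref{eq:controller1}, the interior equations reduce to $c_1^2 u^e_{xx} = 0$ and $c_2^2 v^e_{xx} = 0$, so $u^e$ and $v^e$ are affine functions of~$x$; these profiles are admissible elements of $\mathcal{H}$ since affine functions belong to $H^1$.

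First I would exploit the boundary conditions at $x=0$. For the plant, $u^e_x(0) = g\, u_t(0) = 0$, which kills the slope of $u^e$, so $u^e$ is a constant $b \in \mathbb{R}$. Hence the measurement satisfies $y = u^e_x(1) = 0$, and the controller boundary condition $v^e_x(0) = y = 0$ likewise forces $v^e$ to be a constant, say $v^e \equiv \beta$. Next I would chain the two remaining relations: the actuation equation $u(1) = v(0) + r$ with $r \equiv 0$ gives $b = \beta$, i.e.\ $u^e = v^e$ is a single real constant, which establishes the first part of the claim.

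It then remains to use the controller's boundary condition at $x=1$. Since $v_t = 0$ and $v^e_x \equiv 0$, the relation $v^e_x(1) = -h v_t(1) - q v^e(1)$ becomes $0 = -q\, v^e(1) = -q\, u^e$, that is $q u^e = 0$. If $q \neq 0$, this forces $u^e = 0$, hence also $v^e = 0$, so the only equilibrium is $0_{\mathcal{H}}$ (equivalently, the zero element of $\mathcal{H}_0$). I do not expect a genuine obstacle in this argument; the only point needing a word of care is to state precisely what is meant by an equilibrium point — a time-independent (classical or mild) solution of \eqref{eq:system1}--\eqref{eq:controller1} — and to note that the constant profiles obtained are indeed in $\mathcal{H}$, which is immediate.
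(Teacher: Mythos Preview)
Your argument is correct and follows essentially the same route as the paper: reduce the equilibrium to affine profiles, use $u^e_x(0)=0$ and $v^e_x(0)=y=0$ to make both constant, match them through $u(1)=v(0)$, and then read off $qu^e=0$ from the boundary condition at $x=1$. If anything, your write-up is more explicit about the chaining of boundary conditions than the paper's own proof.
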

\begin{proof} Assume $(u^e, v^e)$ is an equilibrium point. Then for $x \in (0, 1)$, $u^e$ and $v^e$ are two first order polynomials. The boundary condition on $v^e$ at $x = 1$ gives $v^e(0) = 0$ if $q\neq 0$. Since $u^e_x(0) = 0$, we get $u^e \in \mathbb{R}$. Then, since we have $u^e(1) = v^e(0)$, that gives $u^e = 0$ and, consequently, $u^e = 0$. If $q = 0$, we only get $u^e = v^e \in \mathbb{R}$. \end{proof}

The desired property throughout this paper is the exponential stability of system \eqref{eq:system1}-\eqref{eq:controller1}  defined as follows.
\begin{definition} \label{def:expo} A solution of system \eqref{eq:system1}-\eqref{eq:controller1} with initial condition $(u^0, u_t^0, v^0, v_t^0) \in \mathcal{H}$ is said to be \textbf{$\mathcal{H}$-exponentially stable} if there exist $\gamma > 1, \alpha > 0$ such that the following inequality holds for $t \geq 0$:
\begin{equation} \label{eq:expEstimate}
	\small{\|(u(t), u_t(t), v(t), v_t(t))\|_{\mathcal{H}} \leq \gamma \|(u^0, u_t^0, v^0, v_t^0) \|_{\mathcal{H}} e^{-\alpha t}}.
\end{equation}
\end{definition}
\vspace{0.1cm}
\begin{definition} \label{def:expo2} A solution of system \eqref{eq:system1}-\eqref{eq:controller1} with initial condition $(u^0, u_t^0, v^0, v_t^0) \in \mathcal{H}$ is said to be \textbf{$\mathcal{H}_0$-exponentially stable} if there exist $\gamma > 1, \alpha > 0$ such that the following inequality holds for $t \geq 0$:
\begin{equation} \label{eq:expEstimate2}
	\small{\|(u(t), u_t(t), v(t), v_t(t))\|_{\mathcal{H}_0} \leq \gamma \|(u^0, u_t^0, v^0, v_t^0) \|_{\mathcal{H}_0} e^{-\alpha t}}.
\end{equation}
\end{definition}
\vspace{0.1cm}

After these very general results, the proof of dissipativity is given thanks to the construction of a Lyapunov functional, which has a strictly negative derivative along the trajectories of system \eqref{eq:system1}-\eqref{eq:controller1}.

\subsection{Main theorem}

As a first step, we consider the case $q \neq 0$, and the following theorem is derived for a convergence in speed and position.

\begin{theo} \label{sec:theoUndamped} The unique solution of system \eqref{eq:system1}-\eqref{eq:controller1} with initial condition $(u^0, u_t^0, v^0, v_t^0) \in \mathcal{H}$ is $\mathcal{H}_0$-exponentially stable and converges to $0_{\mathcal{H}}$ if there exists real numbers $S_1, S_2, S_3, S_4, S_5 > 0$ such that the following LMI holds with $q \neq 0$:
\begin{equation} \label{eq:PsiNegative}
	\Psi_{c_1, c_1 g}(0) \prec 0,
\end{equation}
with 
\[
	\hspace{-0.2cm}
	\begin{array}{cl}
		\Psi_{c_1, c_1g}(\alpha) & \!\!\!\! = H_{c_1, c_1 g}^{\top} E_{\alpha}(1)S H_{c_1, c_1 g} - G_{c_1, g}^{\top} S G_{c_1, c_1 g} + Q_{\alpha}, \\
		H_{c_1, c_1 g} & \!\!\!\! = \left[ \begin{smallmatrix} 0 & c_1 & 1 & 0 & 0 \\ 1-c_1g & 0 & 0 & 0 & 0 \\ 0 & 0 & 0 & 1-c_2 h & -c_2 q \\ 0 & -c_2 & 1 & 0 & 0 \end{smallmatrix} \right], \\
		G_{c_1, c_1 g} & \!\!\!\! = \left[ \begin{smallmatrix} 1+c_1 g & 0 & 0 & 0 & 0 \\ 0 & -c_1 & 1 & 0 & 0 \\  0 & c_2 & 1 & 0 & 0 \\ 0 & 0 & 0 & 1+c_2h & c_2 q \end{smallmatrix} \right], \\
		Q_\alpha & \!\!\!\! = \diag \left(0_{3,3}, \left[ \begin{smallmatrix} 0 & S_5 \\  S_5 & 2 \alpha S_5 \end{smallmatrix} \right] \right), \\ 
		S & \!\!\!\! = \diag(S_1, S_2, S_3, S_4), \\
        E_{\alpha}(x) & \!\!\!\! = \diag(e^{2 \alpha x c_1^{-1}} I_2, e^{2 \alpha x c_2^{-1}}I_2).
	\end{array}
\]
\end{theo}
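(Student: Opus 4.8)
The plan is to construct a weighted Lyapunov functional for the closed loop out of the Riemann invariants of the two strings, whose time-derivative is $-2\alpha V$ plus a boundary quadratic form equal, after substitution of the boundary conditions, to $\xi(t)^\top\Psi_{c_1,c_1g}(\alpha)\,\xi(t)$ for a five-dimensional vector $\xi(t)$ of boundary traces; the LMI \eqref{eq:PsiNegative} is then exactly the condition that forces $\dot V\le-2\alpha V$. To begin, I would set $\alpha_1=u_t+c_1u_x$, $\beta_1=u_t-c_1u_x$ for \eqref{eq:system1} and $\alpha_2=v_t+c_2v_x$, $\beta_2=v_t-c_2v_x$ for \eqref{eq:controller1}. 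Differentiating the two wave equations gives $\partial_t\alpha_i=c_i\partial_x\alpha_i$ and $\partial_t\beta_i=-c_i\partial_x\beta_i$, so each invariant is a transport variable ($\alpha_i$ moving toward $x=0$, $\beta_i$ toward $x=1$, at speed $c_i$), and $\|(u,u_t,v,v_t)\|_{\mathcal H}^2=\tfrac12(\|\alpha_1\|^2+\|\beta_1\|^2+\|\alpha_2\|^2+\|\beta_2\|^2)$. Put $\xi(t)=(u_t(0,t),u_x(1,t),u_t(1,t),v_t(1,t),v(1,t))^\top\in\mathbb{R}^5$, $\eta_{\mathrm{in}}(t)=(\alpha_1(1,t),\beta_1(0,t),\alpha_2(1,t),\beta_2(0,t))^\top$ and $\eta_{\mathrm{out}}(t)=(\alpha_1(0,t),\beta_1(1,t),\alpha_2(0,t),\beta_2(1,t))^\top$. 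Using the anti-damping $u_x(0)=gu_t(0)$, the interconnection $u(1)=v(0)$ (hence $u_t(1)=v_t(0)$) together with the measurement relation $v_x(0)=y=u_x(1)$, and the controller end condition $v_x(1)=-hv_t(1)-qv(1)$, every boundary trace becomes linear in $\xi$; matching the eight rows one verifies that $\eta_{\mathrm{in}}(t)=H_{c_1,c_1g}\,\xi(t)$ and $\eta_{\mathrm{out}}(t)=G_{c_1,c_1g}\,\xi(t)$ with precisely the matrices in the statement, while differentiating $v(1,\cdot)$ gives the scalar identity $\tfrac{d}{dt}v(1,t)=v_t(1,t)$, i.e.\ $\dot\xi_5=\xi_4$.

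I would then stack the four invariants into $\eta(x,t)$, reading the two rightward ones ($\beta_1,\beta_2$) in the reversed variable $1-x$ so that all four components are transported toward $x=0$; then $\eta(1,t)=\eta_{\mathrm{in}}(t)$, $\eta(0,t)=\eta_{\mathrm{out}}(t)$, and each component $\eta_i$ solves a leftward transport equation at its own speed. Take
\[
  V(t)=\int_0^1\eta(x,t)^\top E_\alpha(x)\,\tilde S\,\eta(x,t)\,dx+S_5\,v(1,t)^2,
\]
where $\tilde S=\diag(S_1/c_1,S_2/c_1,S_3/c_2,S_4/c_2)$ --- the rescaling of $S$ by the transport speeds being exactly what makes the $c_i$'s cancel below. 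Since $S_1,\dots,S_5>0$ and $I_4\preceq E_\alpha(x)\preceq E_\alpha(1)$ on $[0,1]$, $V$ is equivalent to $\|(u,u_t,v,v_t)\|_{\mathcal H_0}^2$ with constants depending only on $S,\alpha,c_1,c_2$. Differentiating $V$ along strong solutions, using the transport equations and integrating by parts in $x$: the derivative of the exponential weight $E_\alpha$ yields the term $-2\alpha V$; the boundary flux produced by the integration by parts equals $\eta_{\mathrm{in}}^\top E_\alpha(1)S\,\eta_{\mathrm{in}}-\eta_{\mathrm{out}}^\top S\,\eta_{\mathrm{out}}$ (using $E_\alpha(0)=I_4$, and the speed in $\partial_t\eta_i=c_i\partial_x\eta_i$ cancelling the $1/c_i$ in $\tilde S$, so that the boundary term carries the $S$ of the statement); and $\tfrac{d}{dt}(S_5v(1)^2)=2S_5\xi_4\xi_5$. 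As $\xi^\top Q_\alpha\xi=2S_5\xi_4\xi_5+2\alpha S_5\xi_5^2$, the piece $2\alpha S_5\xi_5^2=2\alpha S_5v(1)^2$ absorbs the $S_5$-part of $-2\alpha V$, and substituting $\eta_{\mathrm{in}}=H_{c_1,c_1g}\xi$, $\eta_{\mathrm{out}}=G_{c_1,c_1g}\xi$ gives
\[
  \dot V(t)=-2\alpha\,V(t)+\xi(t)^\top\Psi_{c_1,c_1g}(\alpha)\,\xi(t).
\]

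To conclude, since $\alpha\mapsto\Psi_{c_1,c_1g}(\alpha)$ is continuous and $\Psi_{c_1,c_1g}(0)\prec0$, one has $\Psi_{c_1,c_1g}(\alpha)\prec0$ for all small enough $\alpha>0$; fix such an $\alpha$ and the corresponding $V$. Then $\dot V\le-2\alpha V$, whence $V(t)\le V(0)e^{-2\alpha t}$, and the norm equivalence yields \eqref{eq:expEstimate2}, i.e.\ $\mathcal H_0$-exponential stability with rate $\alpha$. In particular $V$ is a strict Lyapunov functional, which supplies the dissipativity hypothesis needed to invoke Proposition~1, so that the unique solution referred to in the statement is well-defined; the formal computation above is first justified for initial data in the domain of the generator and then extended to all of $\mathcal H$ by density and continuous dependence. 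Finally, $\|(u,u_t,v,v_t)(t)\|_{\mathcal H_0}\to0$ forces $u_x,u_t,v_x,v_t\to0$ in $L^2$ and $v(1,t)\to0$, hence $v(\cdot,t)\to0$ in $L^2$ and, via $u(1,t)=v(0,t)$, also $u(\cdot,t)\to0$ in $L^2$; the solution therefore converges to $0_{\mathcal H}$.

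The routine but delicate part is the two identifications in the first two paragraphs: that the boundary substitutions collapse exactly onto $H_{c_1,c_1g}$, $G_{c_1,c_1g}$ and $Q_\alpha$, and that the integration-by-parts/flux computation reproduces precisely $-2\alpha V+\xi^\top\Psi_{c_1,c_1g}(\alpha)\xi$ with no residual speed factors. I expect this bookkeeping --- all signs and scalings --- to be the main obstacle; everything after it (continuity in $\alpha$, the norm equivalence, the density argument) is soft.
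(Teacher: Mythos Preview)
Your proposal is correct and follows essentially the same route as the paper's own proof: the paper defines exactly your stacked Riemann vector $\chi(x,t)=\eta(x,t)$ (with the $\beta_i$ read at $1-x$), the Lyapunov functional $\mathcal V_\alpha=\int_0^1\chi^\top E_\alpha\Lambda^{-1}S\chi\,dx+S_5v(1)^2$ (your $\tilde S=\Lambda^{-1}S$), and the boundary vector $\xi=[u_t(0),u_x(1),v_t(0),v_t(1),v(1)]^\top$ (your third entry $u_t(1)$ equals $v_t(0)$ by the interconnection), obtaining $\dot{\mathcal V}_\alpha+2\alpha\mathcal V_\alpha=\xi^\top\Psi_{c_1,c_1g}(\alpha)\xi$ and concluding by continuity in $\alpha$. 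Your added remarks on density and on the $L^2$ convergence to $0_{\mathcal H}$ are not in the paper's proof but are consistent with the surrounding framework.
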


\begin{proof} Let us introduce the following variable: 
\[
	\chi(x,t) = \left[ \begin{matrix} u_t(x, t) + c_1 u_x(x, t) \\ 
							u_t(1-x, t) - c_1 u_x(1-x, t) \\
							v_t(x, t) + c_2 v_x(x, t) \\
							v_t(1-x, t) - c_2 v_x(1-x, t) 
			\end{matrix} \right],
\]
 for $t \geq 0, x \in (0, 1)$. This variable is based on modified Riemann invariant \cite{besselString,bastin2016stability}, which has the following property: $\chi_t = \Lambda \chi_x$ with $\Lambda = \diag(c_1, c_1, c_2, c_2)$. Following \cite{besselString,4060979,morgul1994dynamic}, we introduce a Lyapunov functional:
 \begin{equation} \label{eq:V1}
 	\V_{\alpha}(\chi, v(1)) = \int_0^1 \chi^{\top}(x) E_{\alpha}(x) \Lambda^{-1} S \chi(x) dx + S_5 v^2(1),
 \end{equation}
 where the time variable has been omitted for the sake of simplicity.
Note that $\V_{\alpha}$ is equivalent to $\| \cdot \|_{\mathcal{H}_0}$ and its derivative along the trajectories of system \eqref{eq:system1}-\eqref{eq:controller1} gives:
 \begin{equation} \label{eq:V1dot}
 	\begin{array}{cl}
 		\!\!\!\dot{\V}_{\alpha}(\chi, v(1)) & \!\!\!\! = 2 \displaystyle\int_0^1 \chi_x^{\top}(x) E_{\alpha}(x)S \chi(x) dx + 2 S_5 v(1) v_t(1) \\
 		& \!\!\!\! = \left[ \chi^{\top}(x) E_{\alpha}(x) S \chi(x) \right]_0^1 \\
 		& \hfill - 2 \alpha \displaystyle\int_0^1 \chi^{\top}(x) E_{\alpha}(x) \Lambda^{-1} S \chi(x) dx \\
 		& \hfill + 2 S_5 v(1) v_t(1) \\
 		& \!\!\!\!= -2 \alpha \V_{\alpha}(\chi, v(1)) +  \chi^{\top}(1) E_{\alpha}(1) S \chi(1) \\
 		& \hfill - \chi^{\top}(0) S \chi(0) + 2 \alpha S_5 v^2(1) + 2 S_5 v(1) v_t(1). \\
	\end{array}
 \end{equation}
 
 Introducing $\xi = \left[ u_t(0) \ \ u_x(1) \ \ v_t(0) \ \ v_t(1) \ \ v(1) \right]^{\top}$, the two states $\chi(0)$ and $\chi(1)$ can be rewritten as $\chi(0) = G_{c_1, c_1 g} \xi$, $\chi(1) = H_{c_1, c_1 g} \xi$ so that we get:
 \[
 	\dot{\V}_{\alpha}(\chi, v(1)) + 2 \alpha \V_{\alpha}(\chi, v(1)) = \xi^{\top} \Psi_{c_1, c_1g}(\alpha) \xi \infeq 0.
 \]
 
As it is continuous with respect to $\alpha$, if $\Psi_{c_1, c_1g}(0) \prec 0$, it is also the case for a sufficiently small $\alpha$. Then, the solutions converge exponentially with respect to $\| \cdot \|_{\mathcal{H}_0}$. %
\end{proof}




It is possible to prove that the previous theorem does not hold if $h < -1$ but the exponential convergence in semi-norm~\eqref{eq:seminorm} still holds, meaning that the solutions do not converge to $0_{\mathcal{H}}$ but the wave speeds $u_x$ and $u_t$ are indeed going exponentially to $0$. This weaker stability condition is expressed in the following corollary, dealing with the case where $q = 0$.

\begin{corollary} The unique solution of system \eqref{eq:system1}-\eqref{eq:controller1} with $q = 0$ and initial condition $(u^0, u_t^0, v^0, v_t^0) \in \mathcal{H}$ is $\mathcal{H}$-exponentially stable if there exists $S_1, S_2, S_3, S_4 > 0$ such that the following LMI holds:
\begin{equation} \label{eq:PsiTilde}
	\tilde{\Psi}_{c_1, c_1 g} = \begin{bmatrix}I_{4} &0_{4,1}\end{bmatrix} \Psi_{c_1, c_1 g}(0) \begin{bmatrix}I_{4} &0_{4,1}\end{bmatrix}^\top \prec 0,
\end{equation}
\end{corollary}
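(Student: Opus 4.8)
The plan is to reuse, almost verbatim, the Lyapunov construction of Theorem~\ref{sec:theoUndamped}, but dropping the part of the functional and the boundary data that concern $v(1)$. First I would set $S_5 = 0$ in~\eqref{eq:V1}, i.e.\ work with the truncated functional $\V_{\alpha}(\chi) = \int_0^1 \chi^{\top}(x) E_{\alpha}(x) \Lambda^{-1} S \chi(x)\, dx$, which is exactly the square of the seminorm~\eqref{eq:seminorm} up to the equivalence constants coming from $E_\alpha$ and $\Lambda^{-1}S$, since $\chi$ collects the Riemann invariants $u_t \pm c_1 u_x$ and $v_t \pm c_2 v_x$ and these are equivalent data to $(u_x, u_t, v_x, v_t)$ in $L^2$. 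Then I would redo the computation in~\eqref{eq:V1dot} with $S_5 = 0$: the interior integration by parts is identical, so $\dot{\V}_\alpha(\chi) + 2\alpha \V_\alpha(\chi) = \chi^{\top}(1) E_\alpha(1) S \chi(1) - \chi^{\top}(0) S \chi(0)$.

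Next I would handle the boundary terms. The key point is that when $q = 0$ the controller boundary condition at $x = 1$ reads $v_x(1,t) = -h v_t(1,t)$, which no longer involves $v(1)$; consequently the reduced boundary vector is $\tilde\xi = [\,u_t(0)\ \ u_x(1)\ \ v_t(0)\ \ v_t(1)\,]^{\top}$, and one has $\chi(0) = \tilde G\,\tilde\xi$, $\chi(1) = \tilde H\,\tilde\xi$ where $\tilde G, \tilde H$ are precisely the first four columns of $G_{c_1,c_1g}$ and $H_{c_1,c_1g}$ (the fifth columns, which carried the $-c_2 q$ and $c_2 q$ entries, vanish when $q=0$ anyway, and the $v(1)$-component of $\xi$ is simply no longer needed). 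Hence $\dot{\V}_\alpha(\chi) + 2\alpha \V_\alpha(\chi) = \tilde\xi^{\top}\big(\tilde H^{\top} E_\alpha(1) S \tilde H - \tilde G^{\top} S \tilde G\big)\tilde\xi$, and since $Q_\alpha$ has the block form $\diag(0_{3,3}, [\begin{smallmatrix} 0 & S_5 \\ S_5 & 2\alpha S_5\end{smallmatrix}])$ which contributes nothing in the first four coordinates, this quadratic form is exactly $\tilde\xi^{\top}\tilde\Psi_{c_1,c_1g}\,\tilde\xi$ with $\tilde\Psi_{c_1,c_1g} = \begin{bmatrix}I_4 & 0_{4,1}\end{bmatrix}\Psi_{c_1,c_1g}(0)\begin{bmatrix}I_4 & 0_{4,1}\end{bmatrix}^{\top}$ as in~\eqref{eq:PsiTilde}. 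So $\tilde\Psi_{c_1,c_1g} \prec 0$ gives $\dot{\V}_\alpha(\chi) \leq -2\alpha \V_\alpha(\chi)$ for $\alpha = 0$, and by continuity in $\alpha$ for some small $\alpha > 0$, whence $\V_\alpha(\chi(t)) \leq \V_\alpha(\chi(0)) e^{-2\alpha t}$ and, using the norm equivalence, the claimed $\mathcal{H}$-exponential decay estimate~\eqref{eq:expEstimate}.

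I would also note that dissipativity in $(\mathcal{H}, \|\cdot\|_{\mathcal H})$ follows from $\tilde\Psi_{c_1,c_1g}\prec 0$ (take $\alpha = 0$: $\dot{\V}_0(\chi) = \tilde\xi^\top \tilde\Psi_{c_1,c_1g}\tilde\xi \le 0$, strictly negative unless $\tilde\xi = 0$), so that Proposition~1 applies and the solution whose decay we are estimating is indeed well defined and unique; this should be mentioned for the argument to be self-contained, though it is a routine transcription of the $q\neq 0$ case.

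The main obstacle, and the only genuinely delicate point, is the seminorm subtlety: with $q=0$ and $h<-1$ the functional $\V_0$ controls only $u_x,u_t,v_x,v_t$ and not $v(1)$, so one must be careful to claim only $\mathcal{H}$-exponential stability and not convergence to $0_{\mathcal H}$ — indeed Proposition~\ref{sec:equilibirum} shows that for $q=0$ the whole line $u^e=v^e\in\mathbb{R}$ consists of equilibria, so $\mathcal{H}_0$-stability is genuinely false. One should double-check that $\V_0$ is positive on the quotient, i.e.\ that $E_\alpha(x)\Lambda^{-1}S \succ 0$ (immediate since $S\succ0$, $\Lambda\succ0$, $E_\alpha(x)\succ0$) and that $\chi\equiv0$ forces $u_x=u_t=v_x=v_t\equiv0$ (immediate from the definition of $\chi$ as an invertible linear combination), so that $\V_0$ is a bona fide seminorm equivalent to~\eqref{eq:seminorm}; everything else is the same bookkeeping as before, now with the $5$th row and column deleted.
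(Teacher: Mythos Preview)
Your proposal is correct and follows essentially the same approach as the paper: drop the $S_5 v(1)^2$ term from~\eqref{eq:V1}, use the reduced boundary vector $\tilde\xi = [u_t(0)\ u_x(1)\ v_t(0)\ v_t(1)]^\top$, and observe that the resulting truncated Lyapunov functional is equivalent to the seminorm~\eqref{eq:seminorm}. Your write-up is considerably more detailed than the paper's two-line proof (which simply says ``similarly to the previous proof''), and your extra remarks on well-posedness and the seminorm subtlety are accurate and helpful, though not strictly needed for the argument.
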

\begin{proof} Similarly to the previous proof, we consider another Lyapunov functional:
\[
 	\V_{\alpha}(\chi) = \int_0^1 \chi^{\top}(x) E_{\alpha}(x) \Lambda^{-1} S \chi(x) dx,
\]
and the extended state: $\xi = \left[ u_t(0) \ \ u_x(1) \ \ v_t(0) \ \ v_t(1) \right]^{\top}$.  $\V_{\alpha}$ is then equivalent to $\|\cdot\|_{\mathcal{H}}$ and it is exponentially stable in the sense of $\|\cdot\|_{\mathcal{H}}$. \end{proof}

The previous results are presented in terms of LMIs but for a stable wave and controller with $q = 0$, these inequalities are always verified as stated in the following corollary.
\begin{corollary} \label{sec:coroStable} If $h > 0, g > 0$ with $q = 0$, the unique solution of system \eqref{eq:system1}-\eqref{eq:controller1} with initial condition $(u^0, u_t^0, v^0, v_t^0) \in \mathcal{H}$ is $\mathcal{H}$-exponentially stable.
\end{corollary}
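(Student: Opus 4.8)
The plan is to reduce the statement to the preceding corollary: since $q=0$ and $h>0$, it suffices to exhibit $S_1,S_2,S_3,S_4>0$ for which $\tilde{\Psi}_{c_1,c_1g}=\begin{bmatrix}I_{4} &0_{4,1}\end{bmatrix}\Psi_{c_1,c_1g}(0)\begin{bmatrix}I_{4} &0_{4,1}\end{bmatrix}^\top\prec0$. First I would make this $4\times4$ matrix explicit. At $\alpha=0$ one has $E_0(1)=I_4$; the only nonzero entries of $Q_0$ lie in its last two rows and columns, hence $Q_0$ does not affect the top-left $4\times4$ block; and with $q=0$ the fifth columns of $H_{c_1,c_1g}$ and $G_{c_1,c_1g}$ vanish. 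Therefore $\tilde{\Psi}_{c_1,c_1g}=\tilde H^\top S\tilde H-\tilde G^\top S\tilde G$, where $S=\diag(S_1,S_2,S_3,S_4)$ and $\tilde H$, $\tilde G$ are the $4\times4$ matrices obtained from $H_{c_1,c_1g}$, $G_{c_1,c_1g}$ by deleting the (zero) fifth column.

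A direct computation then shows that $\tilde{\Psi}_{c_1,c_1g}=\diag(\pi_1,\,M,\,\pi_4)$ is block diagonal, with scalar blocks
\[
\pi_1=(1-c_1g)^2S_2-(1+c_1g)^2S_1,\qquad \pi_4=(1-c_2h)^2S_3-(1+c_2h)^2S_4,
\]
and middle block
\[
M=\begin{bmatrix} c_1^2(S_1-S_2)+c_2^2(S_4-S_3) & c_1(S_1+S_2)-c_2(S_3+S_4)\\ c_1(S_1+S_2)-c_2(S_3+S_4) & (S_1-S_2)+(S_4-S_3)\end{bmatrix}.
\]
So $\tilde{\Psi}_{c_1,c_1g}\prec0$ reduces to $\pi_1<0$, $\pi_4<0$ and $M\prec0$.

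I would then choose the $S_i$ so that $M$ becomes diagonal: impose $S_1+S_2=c_2$ and $S_3+S_4=c_1$, which kills the off-diagonal entry of $M$, and set $S_1=\tfrac{c_2}{2}-\delta$, $S_2=\tfrac{c_2}{2}+\delta$, $S_3=\tfrac{c_1}{2}+\delta'$, $S_4=\tfrac{c_1}{2}-\delta'$ with $\delta,\delta'>0$ small. Then $S_1,S_2,S_3,S_4>0$ as soon as $\delta<c_2/2$ and $\delta'<c_1/2$, one gets $M=\diag\!\big(-2(c_1^2\delta+c_2^2\delta'),\,-2(\delta+\delta')\big)\prec0$, and a short computation gives $\pi_1=-2c_1c_2g+2\delta(1+c_1^2g^2)$ and $\pi_4=-2c_1c_2h+2\delta'(1+c_2^2h^2)$, both strictly negative once $\delta<c_1c_2g/(1+c_1^2g^2)$ and $\delta'<c_1c_2h/(1+c_2^2h^2)$. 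Since $c_1,c_2,g,h>0$ all these bounds are strictly positive, so such $\delta,\delta'$ exist; for that choice $\tilde{\Psi}_{c_1,c_1g}\prec0$ and the corollary follows.

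The only genuine obstacle is the $2\times2$ block $M$: the symmetric guess $S_1=S_2=S_3=S_4$ yields $M=\big[\begin{smallmatrix}0 & 2(c_1-c_2)\\ 2(c_1-c_2) & 0\end{smallmatrix}\big]$, which is indefinite when $c_1\neq c_2$ and only negative semidefinite when $c_1=c_2$; hence one really needs the "balance-then-perturb" construction, and one must check that $\delta$ and $\delta'$ can be taken small enough simultaneously, which is precisely where the strict damping $g>0$, $h>0$ enters through the positive margins $2c_1c_2g$ and $2c_1c_2h$. (This matches the intuition that, for $c_1=c_2$, the closed loop is a string on $(0,2)$ that is strictly dissipative at each endpoint.)
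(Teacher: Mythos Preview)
Your proof is correct and follows essentially the same route as the paper's: both compute the block-diagonal structure $\tilde\Psi_{c_1,c_1g}=\diag(\pi_1,M,\pi_4)$, then choose the $S_i$ so that the off-diagonal entry $c_1(S_1+S_2)-c_2(S_3+S_4)$ of $M$ vanishes, and finally take a small perturbation to make all three blocks strictly negative. The only cosmetic difference is the parametrization: the paper fixes $S_1=S_2=\tfrac12$ (so $\pi_1=-2c_1g<0$ automatically) and uses a single parameter $\varepsilon$ via $S_4=S_3-\varepsilon$, $S_3=\tfrac{c_1}{2c_2}+\tfrac{\varepsilon}{2}$, whereas you symmetrize with two parameters $\delta,\delta'$; your version is a bit more explicit about the bounds required on the perturbations.
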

\begin{proof} First of all, note that $h > 0$ and $g > 0$ ensure that $\left| \frac{1 - c_1 g}{1 + c_1 g} \right| < 1$ and $\left| \frac{1 - c_2 h}{1 + c_2 h} \right| < 1$. Moreover by selecting $S_1 = S_2 = 0.5$ and $S_4 = S_3-\varepsilon$, we can rewrite $\tilde{\Psi}_{c_1, c_1g}$ in equation~\eqref{eq:PsiTilde} as:
\[
	\begin{array}{l}
		\tilde{\Psi}_{c_1, c_1g} = \diag \left(-2 c_1 g, \left[ \begin{smallmatrix} -c_2^2 \varepsilon &  c_1 - 2 c_2 S_3 + c_2 \varepsilon \\ c_1 - 2 c_2 S_3 + c_2 \varepsilon & -\varepsilon \end{smallmatrix} \right], \right. \ \ \ \ \\
		\hfill \left. \phantom{\left[ \begin{smallmatrix} S^2_3 \\ S^2_3 \end{smallmatrix} \right]} S_3 (c_2 h - 1)^2 - (S_3 - \varepsilon) (c_2 h + 1)^2 \right).
	\end{array}
\]

For $\varepsilon < \frac{c_1}{c_2}$ and $S_3 = \frac{c_1}{2 c_2} + \frac{\varepsilon}{2}$ then $\tilde{\Psi}_{c_1, c_1 g} \prec 0$ if and only if $S_3 (c_2 h - 1)^2 - (S_3 - \varepsilon) (c_2 h + 1)^2 < 0$. This is ensured by taking $\epsilon$ small enough. Consequently system \eqref{eq:system1}-\eqref{eq:controller1} is exponentially stable. \end{proof}

\begin{remark} Noting that for $c_1 = c_2 = c$ and $q = 0$, system \eqref{eq:system1}-\eqref{eq:controller1} is a wave equation of speed $c$ and length $2$. The dynamic controller acts then similarly than the one derived in \cite{gugat2015exponential}. The stability of this double boundary damped system is indeed $|\frac{1 - c g}{1 + c g} \frac{1 - c h}{1 + c h}| < 1$ as noted in \cite[ch. 3.3.1]{bastin2016stability}. The stability chart with respect to $cg$ and $ch$ is depicted in Figure~\ref{fig:stab}. Its decay-rate is given by the following formula:
\begin{equation} \label{eq:alphaDyn}
	\alpha_{dyn} = - \frac{c}{4} \log \left| \frac{1 - c g}{1+cg} \frac{1 - ch}{1+ch} \right|.
\end{equation}
\end{remark}

\begin{figure}
	\centering
	\includegraphics[width=9cm]{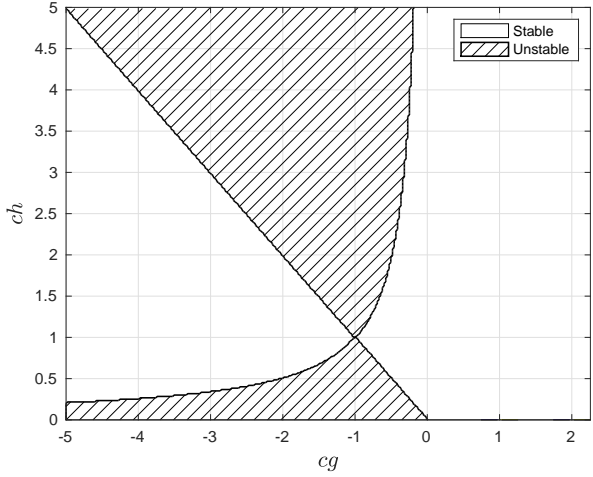}
	\caption{Stability areas for system \eqref{eq:system1}-\eqref{eq:controller1} depending on $c_1 = c_2 = c$, $h$ and $g$. The hatched area is unstable.}
	\label{fig:stab}
\end{figure}

The previous result can be seen as a robust stability criterion. Indeed, considering system~\eqref{eq:system1} with uncertain parameters $c_1 > 0$ and $g > 0$, the coupled system is stable no matter $c_2 > 0$ and $h > 0$.

However, in the case where $g$ is negative, the previous corollary does not apply. This is indeed more difficult because system~\eqref{eq:system1} is unstable and controller \eqref{eq:controller1} must be designed to make the interconnection stable. The next step is to derive a stability result for uncertain systems expressed in terms of LMIs and this is the aim of the following part.


\section{Robustness Analysis / Controller Synthesis}

Let us consider that system \eqref{eq:system1} is now an uncertain system, that is the speed $c_1$ and the damping $g$ are uncertain. Only a nominal system~\eqref{eq:system1} with nominal parameters $c=c_{n}$ and $g=g_{n}$ is known. We assume that the real speed $c_1$ is constant and that both $c_1$ and $c_n$ belong to the interval $[c_{min}, c_{max}] \subset \mathbb{R}^+$. The last parameter $g$ is assumed to be in the set $[g_{min}, g_{max}] \subset \mathbb{R}$ and so does $g_n$. Then, the uncertain system can be viewed as a deviation from the nominal plan.\\
If nominal plant~\eqref{eq:system1} with a speed $c_n$ and a damping coefficient $g_n$ is potentially unstable, then, a controller of the form of \eqref{eq:controller1} can be designed. Indeed, we set $c_2 = c_n$ and choose $h$ such that $(c_n g_n, c_n h)$ is in the stability area presented in Figure~\ref{fig:stab}. Then the nominal plant is exponentially stable with $q = 0$. The condition derived thereafter states the stability of the uncertain system made up of \eqref{eq:system1} and the previously designed controller.



\begin{theo} \label{sec:robustTheo} Let us define the following:
\[
	\begin{array}{l}
		\delta_{max} = \max_{z \in \mathcal{D}} \left| \frac{1-z}{1+z} \right|, 
		z_{max} = \Argmax_{z \in \mathcal{D}} \left| \frac{1-z}{1+z} \right|, \\
		\mathcal{D} = \left\{ z = c_1 g, c_1 \in [c_{min}, c_{max}], g \in [g_{min}, g_{max}] \right\}.
	\end{array}
\]

There exists a unique solution of system \eqref{eq:system1}-\eqref{eq:controller1} with initial condition $(u^0, u_t^0, v^0, v_t^0) \in \mathcal{H}$ and it is $\mathcal{H}$-exponentially stable if the following holds for $S_1, S_2, S_3, S_4 > 0$:
\[
	1 \leq \delta_{max} < \infty \quad \text{ and } \quad  \left\{ \begin{array}{l}
		\Psi_{c_{min}, z_{max}} \prec 0, \\
		\Psi_{c_{max}, z_{max}} \prec 0.
	\end{array} \right.
\]
\end{theo}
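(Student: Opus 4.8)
The plan is to show that the two ``corner'' inequalities force the reduced LMI of \eqref{eq:PsiTilde} to hold for \emph{every} admissible pair $(c_1,g)$ — recall that, since $q=0$, $\Psi_{c_{min},z_{max}}\prec0$ and $\Psi_{c_{max},z_{max}}\prec0$ are to be read as $\tilde\Psi_{c_{min},z_{max}}\prec0$ and $\tilde\Psi_{c_{max},z_{max}}\prec0$ — and then to conclude via the corollary attached to \eqref{eq:PsiTilde} together with the existence and uniqueness proposition. Everything hinges on an explicit block decomposition of $\tilde\Psi$ that decouples the two uncertain quantities.

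The first step is to carry out the same channel-by-channel expansion as in the proof of Corollary~\ref{sec:coroStable}: for $q=0$ and $\alpha=0$ one gets the block-diagonal form
\[
 \tilde\Psi_{c_1,c_1g}=\diag\big(\psi_1(c_1g),\,M(c_1),\,\psi_4\big),
\]
with $\psi_1(z)=S_2(1-z)^2-S_1(1+z)^2$ depending only on the product $z=c_1g$, with $\psi_4=S_3(1-c_2h)^2-S_4(1+c_2h)^2$ depending only on the fixed design data, and with
\[
 M(c_1)=\begin{bmatrix}(S_1-S_2)c_1^2+(S_4-S_3)c_2^2 & (S_1+S_2)c_1-(S_3+S_4)c_2\\ (S_1+S_2)c_1-(S_3+S_4)c_2 & (S_1-S_2)+(S_4-S_3)\end{bmatrix}
\]
depending only on the wave speed $c_1$ (and on fixed data). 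Hence $\tilde\Psi_{c_1,c_1g}\prec0$ is equivalent to the three conditions $\psi_1(c_1g)<0$, $M(c_1)\prec0$, $\psi_4<0$, the uncertainty on $g$ appearing only in the first and the uncertainty on $c_1$ only in the second.

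The second step treats these three conditions separately. Since $\delta_{max}<\infty$ one has $1+z\neq0$ on $\mathcal{D}$, so dividing by $(1+z)^2$ gives $\psi_1(z)<0\iff S_1>S_2\big(\tfrac{1-z}{1+z}\big)^2$; therefore $\psi_1(\cdot)<0$ holds on all of $\mathcal{D}$ if and only if $S_1>S_2\delta_{max}^2$, i.e. if and only if $\psi_1(z_{max})<0$, which is part of either corner LMI. The condition $\psi_4<0$ involves no uncertain parameter and is likewise contained in either corner LMI. For the middle block, observe that $\psi_1(z_{max})<0$ combined with the hypothesis $\delta_{max}\geq1$ yields $S_1-S_2>0$; consequently, for every fixed $v=(v_1,v_2)^\top\neq0$ the scalar map $c\mapsto v^\top M(c)v$ is a polynomial of degree at most two in $c$ with nonnegative leading coefficient $(S_1-S_2)v_1^2$, hence convex on $\mathbb{R}$. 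Since the corner LMIs give $v^\top M(c_{min})v<0$ and $v^\top M(c_{max})v<0$, convexity forces $v^\top M(c)v<0$ for every $c\in[c_{min},c_{max}]$, and as $v$ is arbitrary, $M(c_1)\prec0$ on the whole interval $[c_{min},c_{max}]$.

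Putting the pieces together, $\tilde\Psi_{c_1,c_1g}\prec0$ for every admissible $(c_1,g)$. Fixing the (constant but unknown) plant parameters, the corollary attached to \eqref{eq:PsiTilde} then applies with this common choice of $S_1,\dots,S_4$: the functional $\V_\alpha$ used in its proof has a strictly negative derivative for $\alpha>0$ small enough, which is exactly the dissipativity needed to invoke the existence and uniqueness proposition, and which then yields $\mathcal{H}$-exponential stability in the sense of Definition~\ref{def:expo} (a uniform decay rate over the uncertainty set could moreover be extracted by compactness and continuity of $\tilde\Psi$, though this is not required). The step I expect to be the real obstacle is the convexity argument for the middle block — specifically, recognizing that the hypothesis $\delta_{max}\geq1$ is precisely what guarantees $S_1-S_2>0$, hence the convexity of $c\mapsto v^\top M(c)v$, which is what makes it enough to test only the two extreme speeds $c_{min},c_{max}$ at the single worst reflection value $z_{max}$.
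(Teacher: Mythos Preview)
Your proof is correct and follows essentially the same route as the paper: the identical block-diagonal splitting $\diag(\Phi_z,\Theta_{c_1},\Xi)$, the same treatment of the first block via the worst reflection $z_{max}$ (using $\delta_{max}<\infty$ to divide by $(1+z)^2$), and the same key observation that $\delta_{max}\ge1$ together with $\Phi_{z_{max}}<0$ forces $S_1>S_2$, which is exactly what makes the middle block convex in $c_1$. The only technical difference is in how that convexity is exploited: the paper takes a Schur complement of $\Theta_{c_1}$ to obtain a scalar quadratic $p(c_1)=k_2c_1^2+k_1c_1+k_0$ and checks $k_2=S_1-S_2-\tfrac{(S_1+S_2)^2}{S_1-S_2-S_3+S_4}>0$, whereas you argue directly that $c\mapsto v^\top M(c)v$ has nonnegative leading coefficient $(S_1-S_2)v_1^2$ for every $v$; your variant is slightly more elementary and sidesteps verifying the sign of the Schur denominator.
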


\begin{remark} Notice that coefficient $\delta(cg) = \frac{1 - cg}{1+cg}$ is a physical parameter for the wave equation and corresponds to the reflexion coefficient (see \cite{barreauInputOutput,louw2012forced}). Taking $\delta_{max} = \max_{cg \in \mathcal{D}} \left| \delta(cg) \right|$ means we are studying the ``most'' unstable system in the uncertainty set. 
For $\delta_{max} < 1$,  Corollary~\ref{sec:coroStable} states that the uncertain system is stable. If $\delta_{max} = + \infty$, there does not exist a controller of the form $\eqref{eq:controller1}$ making the system stable. These considerations bring that $\mathcal{D} \subset(-1, +\infty)$ or $\mathcal{D} \subset (-\infty, -1)$.\end{remark}

\begin{proof} The robustness analysis is based on the derivation of a common Lyapunov functional for all the systems inside the uncertainty set. This Lyapunov functional must have a strictly negative derivative along the trajectories. In other words, $\Psi_{c_1, z} \prec 0$ for all $c_1 \in [c_{min}, c_{max}]$ and $z \in \mathcal{D}$. 

Noticing that $\Psi_{c_1, z}$ is a block-diagonal matrix, one can write $\Psi_{c_1, z} = \diag(\Phi_z, \Theta_{c_1}, \Xi)$ with 
\[
	\begin{array}{l}
		\Phi_z = S_2 (z - 1)^2 - S_1(z + 1)^2, \\
		\Theta_{c_1} = \left[ 
			\begin{smallmatrix} 
				(S_1 - S_2 ) c_1^2 - (S_3 - S_4) c_2^2 & (S_1+ S_2) c_1 - (S_3 + S_4) c_2 \\
				(S_1 + S_2) c_1 - (S_3 + S_4) c_2 & S_1 - S_2 - S_3 + S_4
			\end{smallmatrix} \right],\\
		\Xi= \left[ \begin{smallmatrix} S_3(1-c_2h)^2-S_4(1+c_2h)^2 & -c_2q(S_3(1-c_2h)+S_4(1+c_2h)) \\
				-c_2q(S_3(1-c_2h)+S_4(1+c_2h)) & -S_4(c_2q)^2
			\end{smallmatrix} \right].
	\end{array}
\]

The aim is now to show that $\Phi_z < 0$, $\Theta_{c_1} \prec 0$ and $\Xi \prec 0$ for all uncertain systems. As $\delta_{max} < \infty$, then, $z \neq -1$ and $\Phi_z (z+1)^{-2} < \Phi_{z_{max}}(z_{max}+1)^{-2} < 0$. 

The last part of this proof deals with the negativity of $\Theta_{c_1}$. To derive such a result, one can prove that $\Theta_{c_1}$ is convex in $c_1$. Then if it is negative at its boundary, it is always negative. Using Schur complement, $\Theta_{c_1} \prec 0$ is equivalent to:
\begin{equation} \label{eq:c1rob}
	\left\{
	\begin{array}{l}
		p(c_1) = k_2 c_1^2 + k_1 c_1 + k_0 < 0, \\
		S_1 - S_2 - S_3 + S_4< 0,
	\end{array}
	\right.
\end{equation}
with $k_2 = S_1 - S_2 - \frac{(S_1 + S_2)^2}{S_1 - S_2 - S_3 + S_4}$ and $k_1, k_0 \in \mathbb{R}$.
Considering $\delta_{max} > 1$, we get: 
\[
	S_2  \leq S_2 \delta_{max}^2 < S_1,
\]
since $\Phi_{z_{max}} < 0$. Then, $k_2 > 0$ and consequently $p$ in \eqref{eq:c1rob} is convex with respect to $c_1$. Thus, if $\Theta_{c_{min}}$ and $\Theta_{c_{max}}$ are negative definite, the inequality $\Theta_{c_1} \prec 0$ straightforwardly holds for any $c_1$ in the interval $[c_{min},\ c_{max}]$. In other words, the following implication holds for all $c_1 \in [c_{min}, c_{max}]$ and $z \in \mathcal{D}$:
\[
	\left\{
		\begin{array}{l}
			\Psi_{c_{min}, z_{max}} \prec 0, \\
			\Psi_{c_{max}, z_{max}} \prec 0
		\end{array}
	\right.
	\Rightarrow
	\Psi_{c_1, z} \prec 0,
\]
which concludes the proof.
\end{proof}


\begin{remark} This proof shows that the result is still true even if $g$ is time-varying, but the restriction $g \in [g_{min}, g_{max}]$ holds. However, a time-varying $c_1$ is not allowed because it changes the calculations of the derivative along the trajectories of $\V$. \\
If one adds the constraint $q = 0$, the previous robustness result is still valid for $\tilde{\Psi}$ instead of $\Psi$.\end{remark}

\begin{remark} The proposed controller is not robust to delay in the measure so do the backstepping controllers in \cite{krstic2009delay}. \end{remark}

\section{Examples}

Two examples are studied in this part. The first one considers an open-loop stable  wave equation and the second one an unstable system \eqref{eq:system1}.

\subsection{Stable wave equation}

The first example is an open-loop  stable wave equation with a dynamic controller whose parameters are given below:
\[
	c_1 = c_2 = 1, \quad g = 3, \quad h = 0.9, \quad q = 5.
\]

The closed-loop system is stable according to Theorem~\ref{sec:theoUndamped}. Thanks to equation~\eqref{eq:alphaDyn}, the dynamic controller aims at making the system faster and converging to $0_\mathcal H$. The decay-rate of the solution can be obtained considering the maximum $\alpha$ in Theorem~\ref{sec:theoUndamped} for which it is still feasible. The resulting solution has then a decay-rate of $0.157$.
Notice that, since $q \neq 0$, the closed-loop system converges asymptotically to the only equilibrium point $0$, as shown  in Figure~\ref{fig:stable}. 

\begin{figure}
	\centering
	\includegraphics[width=9cm]{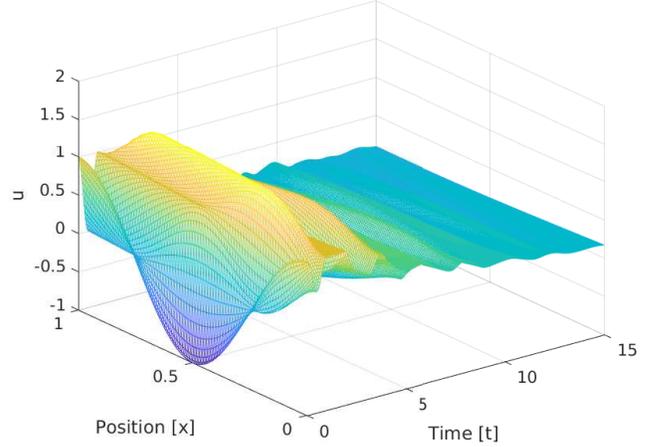}
	\caption{Time response of system \eqref{eq:system1} with a dynamic control and initial condition: $u^0(x) = \cos(2 \pi x)$, $v^0(x) = 1$ and $u^0_t(x) = v_t^0(x) = 0$.}
	\label{fig:stable}
\end{figure}

\subsection{Anti-damped wave equation}

This second example aims at comparing two controllers, the dynamic controller developed in this paper and the one coming from backstepping  approach \cite{krstic2009delay,smyshlyaev2009boundary}, which is  given by:
\begin{equation} \label{eq:controller2}
	w(t) = \frac{g - c_1 k}{c_1 - g k} \left( \frac{g}{c_1} u(0,t) + \int_0^1 u_t(x,t) dx \right).
\end{equation}

Applying this control law, system~\eqref{eq:system1} in closed loop is transformed into the following target system:
\[
	\left\{
	\begin{array}{ll}
		z_{tt}(x,t) = c_1^2 z_{xx}(x,t), \quad & x \in (0, 1), \\
		z_x(0,t) = k z_t(0,t), & \\
		z(1,t) = 0,
	\end{array}
	\right.
\]
where the initial conditions are not expressed, since it is the target system.
Its decay-rate is then given by $\alpha_{b} = -\frac{c_1}{2} \log \left| \frac{1 - c_1 k}{1 + c_1 k} \right|$. Comparing this expression to the decay rate of the proposed closed-loop target system \eqref{eq:alphaDyn}, to get a similar decay-rate (i.e. $\alpha_{dyn} = \alpha_b$) and then a fair comparison between the two controls, one must choose:
\[
	k = c_1^{-1} \frac{1 - \sqrt{|\delta_1 \delta_2|}}{1 + \sqrt{|\delta_1 \delta_2|}}, \quad \text{ or } \quad 
	k = c_1^{-1} \frac{1 + \sqrt{|\delta_1 \delta_2|}}{1 - \sqrt{|\delta_1 \delta_2|}}.
\]

\begin{figure*}
	\centering
	\subfloat[System \eqref{eq:system1} with controller \eqref{eq:controller1}]{\includegraphics[width=9cm]{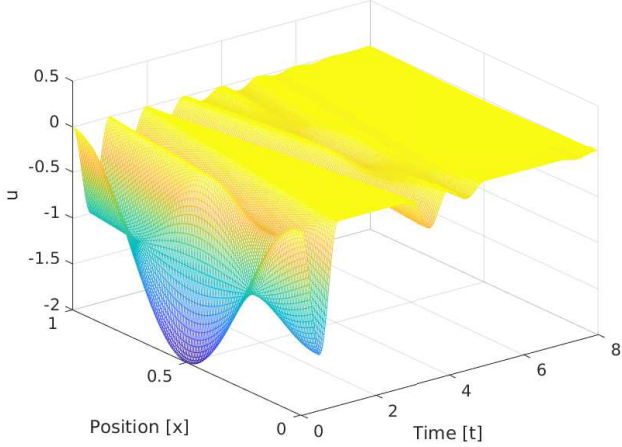}}
	\subfloat[System \eqref{eq:system1} with controller \eqref{eq:controller2}]{\includegraphics[width=9cm]{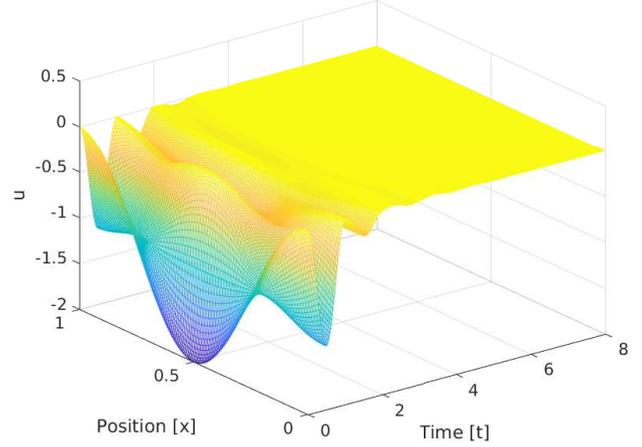}} \\
	\subfloat[Control signals $w$ based on the dynamic controller \eqref{eq:controller1} (blue) and \newline backstepping controller \eqref{eq:controller2} (dashed red).]{\includegraphics[width=9cm]{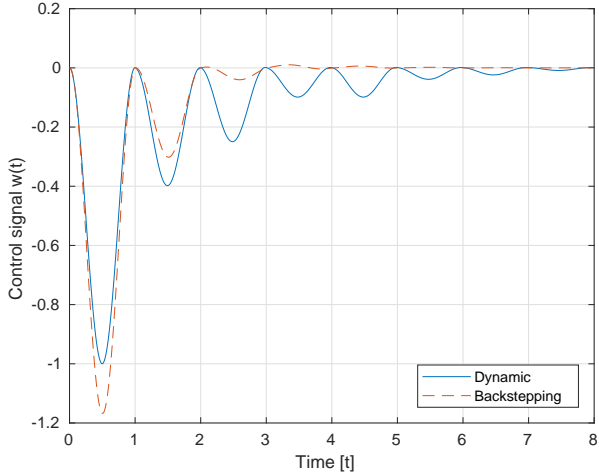} \label{fig:unstableControl}}
	\subfloat[Root locus of the closed-loop systems composed by \eqref{eq:system1} with dynamic controller \eqref{eq:controller1} (red) and backstepping controller \eqref{eq:controller2} (black)]{\includegraphics[width=9cm]{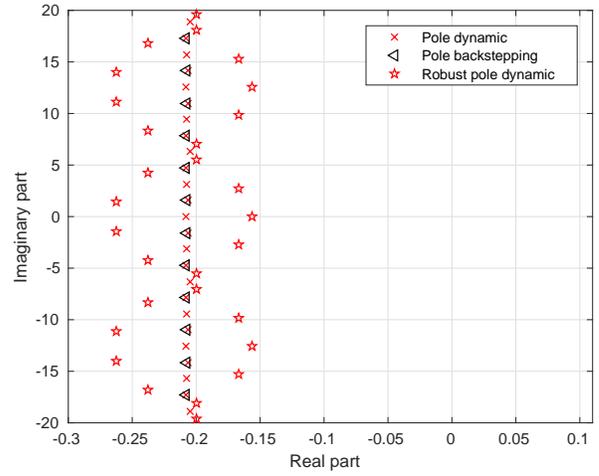} \label{fig:rlocusUnstable}}
	\caption{System behavior depending on the controller. Figures~(a) and (b) are simulations with initial conditions: $u^0(x) = \cos(2 \pi x) -1$, $u^0_t(x) = v^0(x) = v^0_t = 0$ for $x \in (0, 1)$. Figure~(c) is a comparison between the two control laws. Figure~(d) is a root locus comparison between the poles in three situations: backstepping \eqref{eq:controller2} with parameters \eqref{eq:simu1}; dynamic controller \eqref{eq:controller1} with parameters \eqref{eq:simu1} and robust controller \eqref{eq:controller1} with parameters \eqref{eq:simu1} but a speed mismatch ($c_1 = 0.8$).}
	\label{fig:unstable}
\end{figure*}


The parameters chosen for the simulation are then:
\begin{equation} \label{eq:simu1}
	c_1 = c_2 = 1, \ \ g = -0.27, \ \ h = 0.6, \ \ q = 0, \ \ k = 0.205,
\end{equation}
leading to a decay-rate of $0.208$. Figure~\ref{fig:unstable} shows the simulation results for system \eqref{eq:system1} for both controllers. The comparison between the two control signals is given in Figure~\ref{fig:unstableControl}.
The poles of each closed-loop system are displayed in Figure~\ref{fig:rlocusUnstable}. One can see that controller~\eqref{eq:controller1} produces more poles than the backstepping control. This is an argument showing that the two controls are indeed of different kind. The backstepping controller seems faster in Figure~\ref{fig:unstableControl} but Figure~\ref{fig:rlocusUnstable} clearly shows that the poles are indeed with the same real part, and consequently, with the same decay-rate. 
The backstepping controller seems faster in Figure~\ref{fig:unstableControl} but the controllers have been designed to have the same decay-rate.

One drawback of this methodology is that the dynamic controller does not provide an explicit control law while the backstepping, in this case, gives a relatively simple expression for $w$. Even if the backstepping methodology formulates $w$ in terms of $u$, it is also an infinite dimension control law since it uses the integral over space.

The main difference between the two controllers are the needed measurements. While the dynamic controller only requires the measure of $u_x(1,\cdot)$, the backstepping control asks for $u(0,\cdot)$ and $u_t(x, \cdot)$ for $x \in (0, 1)$. If these measurements are not available, an infinite-dimension observer has been developed in \cite{smyshlyaev2009boundary} to estimate the states $u_t$ at each point of the string but requiring the measurement of $u_x(1,\cdot)$ and $u_{xt}(1, \cdot)$. Then, the main advantage of having an explicit control law disappears. In comparison, our methodology provides a simpler control law with only one boundary measurement and a very simple robustness criterion. This is mainly due to the LMI formulation which provides an efficient framework for this kind of study. Moreover, the parameter $\gamma$ in Definition~\ref{def:expo} and equation~\eqref{eq:expEstimate} can be estimated using eigenvalues of $S$ in \eqref{eq:V1}.

A robustness analysis has also been conducted in this case using Theorem~\ref{sec:robustTheo}. We get the exponential stability for the interconnected system with $c_2 = 1, h = 0.6$ and the following uncertainties:
\begin{enumerate}
	\item $c_1 \in [0.74, 1.45]$ with $g \in [-0.27, + \infty)$;
	\item $c_1 \in [0.8, 1.4]$ with $g \in [-0.29, +\infty)$.
\end{enumerate}

There is no upper bound on $g$ as $\delta_{max}$ is always obtained for a negative $g$. According to the previous study, a mismatch between the two speeds ($c_1 = 0.8$ and $c_2 = 1$) leads to a stable interconnected system. In this case with $g = -0.27$, simulations confirm that point.




\section{Conclusion}

An infinite-dimensional controller is derived to stabilize a anti-stable string equation with Dirichlet actuation. 
It is quite simple and the calculations are easily extended to robust stability. This enables the comparison with backstepping and one can notice similar performances for a much more simple implementation. This study brought the idea of extending systems in order to transform them to a more suitable form for control. This idea can be enlarged to other PDEs and maybe also to cascaded PDEs.

\bibliographystyle{plain}
\bibliography{report_draft}

\end{document}